\newtheorem{theorem}{Theorem}[section]
\newtheorem*{theorem'}{Theorem }
\newtheorem*{question}{Question}
\newtheorem{prop}[theorem]{Proposition}
\newtheorem{cor}[theorem]{Corollary}
\newtheorem{lemma}[theorem]{Lemma}
\renewcommand{\(}{\left(}
\renewcommand{\)}{\right)}
\renewcommand{\[}{\left[}
\renewcommand{\]}{\right]}
\renewcommand{\~}[1]{\overline{#1}}
\renewcommand{\geq}{\geqslant}
\renewcommand{\leq}{\leqslant}
\renewcommand{\>}{\right\rangle}
\newcommand{\C}{\mathbb{C}}
\renewcommand{\Cap}[2]{\underset{#1}{\overset{#2}{\cap} }}
\newcommand{\EL}{\textnormal{EL}}
\newcommand{\f}{\varphi}
\newcommand{\G}{\Gamma}
\newcommand{\GL}{\textnormal{GL}}
\renewcommand{\int}{\varint}
\newcommand{\norm}{\trianglelefteqslant}
\newcommand{\Op}{\mathfrak{O}}
\newcommand{\Q}{\mathbb{Q}}
\renewcommand{\Re}{\mathbb{R}}
\newcommand{\SL}{\textnormal{SL}}
\newcommand{\stab}{\mathrm{stab}}
\newcommand{\Z}{\mathbb{Z}}
\title[On images of real representations of $\SL_n(\Op)$]{On images of real representations of special \\ linear groups over Complete Discrete Valuation Rings} 
\author{Talia Fern\'os, Pooja Singla}
\address{Department of Mathematics and Statistics\\University of North Carolina, Greensboro}
\email{t\_fernos@uncg.edu}
\address{Department of Mathematics \\ Indian Institute of Science\\ Bangalore 560012 \\ INDIA
}
\email{pooja@math.iisc.ernet.in}
\subjclass[2010]{Primary 20G25; Secondary 20G05}
\keywords{Image of abstract homomorphism, Ring of integers, 
Real representations of special linear groups, Finite images, complete discrete valuation rings}
\begin{document}
\maketitle


\begin{abstract}
In this paper, we investigate the abstract homomorphisms of the special linear group $\SL_n(\Op)$ over complete discrete valuation rings  with finite residue field into the general linear group $\GL_m(\Re)$ over the field of real numbers. We show that for $m < 2n$, every such homomorphism factors through a finite index subgroup of $\SL_n(\Op)$. For $\Op$ with positive characteristic, this result holds for all $m \in \mathbb N$.
\end{abstract}
\section{Introduction}

Borel and Tits showed in 1973 that in ``most" cases, abstract homomorphisms between algebraic groups are in fact algebraic \cite{BorTits}, i.e. \emph{any} homomorphism $\f: G(k) \to G'(k')$ ``almost'' arises out from a field-morphism $k \to k'$.

In 1975 Margulis showed that higher rank lattices are superrigid. Employing the Borel-Harish Chandra theorems, this means that if $R$ and $k$ are a suitably chosen ring and field respectively then, any abstract homomorphism $G(R) \to G'(k)$  again \emph{almost} arises out of a ring-morphism $R \to k$. 

These results beg the following motivating question:

\begin{question}
 Let $R$ and $R'$ be rings and $G$ and $G'$ be group schemes so that $G(R)$ and $G'(R')$ are well defined. When are the homomorphisms $G(R) \to G'(R')$ dictated by ring-morphisms $R \to R'$?
\end{question}

We purposefully leave $G$ and $G'$  vaguely defined. The  reader may consider algebraic group schemes, or even the group generated by \emph{elementary unipotent } matrices over $R$, which will be defined shortly. Answering questions along these lines, we have: 

\begin{itemize}
\item \cite{BorTits} Let $k$ be an infinite field, $G$ and $G'$ be absolutely almost simple algebraic groups with $G$ simply connected or $G'$ adjoint, and $G$ generated by $k$-unipotents. Modulo the finite centers of $G$ and $G'$, any abstract homomorphism $G(k) \to G'(k')$ with Zariski-dense image arises out of a field homomorphism $k\to k'$.
\item \cite{Margulis}, \cite{BorHarChan1}, \cite{BorHarChan2} Let $\Op$ be the ring of integers of a number field $k$ and $G$ be higher rank and defined over $k$. Let $G'(\C)$ be non compact. Then, any Zariski-dense homomorphism $G(\Op) \to G'(\C)$ arises from a ring-morphism $\Op \to \C$. 
\item \cite{Fer2006} Let $n\geq 3$. Every homomorphism $\SL_n(\Z[x]) \to \GL_D\~\Q$ is not injective. This is a reflection of the fact that $\Z[x]$ does not admit a unital ring embedding into $\~\Q$. 
\item \cite{Shen} Let $n\geq 3$. Any semisimple representation $\SL_n(\Z[x_1, \dots, x_m]) \to \SL_D \C$ is virtually the direct sum of tensor products of ring homomorphisms  $\Z[x_1, \dots, x_m] \to \C$.
\item \cite{KasSap} Let $\Z\<x, y\>$ be the free non-commutative ring on $x$ and $y$. The group \linebreak  $\EL_3(\Z\<x, y\>)$ generated by elementary unipotents over the ring $\Z\<x,y\>$ does not have a faithful finite dimensional representation over any field.
\item The most recent result is due to Igor Rapinchuk \cite{Rapinchuk}. It applies to the very general context of higher rank universal Chevalley-Demazure group schemes, describing their abstract representations into $\GL_D(\mathbb K)$, where $\mathbb K$ is an algebraically closed field. We state an example which we feel both captures the essence of the result and  is relevant to our current work. Let $\Op$ be a local principal ideal ring and $n\geq 3$. Let \linebreak $\f : \SL_n (\Op) \to \GL_D(\C)$ be an abstract homomorphism. If the image is not finite then there exists a commutative $\C$-algebra $B$, an embedding $\iota:\SL_n (\Op) \to \SL_n \C$ (induced from a ring embedding $\Op \to B$) so that, up to finite index, $\f$ factors through $\iota$ composed with a $\C$-algebraic map $\SL_n \C\to \GL_D(\C)$. We remark that the general nature of this theorem makes us believe that, with some additional work, our result for $n \geq 3$ may be deduced from his. On the other hand, our inductive proof holds in the case of $n=2$, and is therefore distinct from Rapinchuk's. 
\end{itemize}


Let $\Op$ be a complete discrete valuation ring with finite residue field. The typical examples of such rings are $\mathbf Z_p$(the ring of $p$-adic integers) and $\mathbf{F}_q[[t]]$(the ring of formal power series with coefficients over a finite field).  Our main result is the following:
\begin{theorem} \label{main theorem}
For every $n \in \mathbb N$ and $D<2n$, the image of any abstract homomorphism $\f: \SL_n(\Op) \to \GL_D(\Re)$ is finite. Furthermore, if $\Op$ has positive characteristic then the image of $\f$ is finite for all $D$.  
\end{theorem}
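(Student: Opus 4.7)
The plan is to show that for some $k \geq 1$ the principal congruence subgroup $\G_k := \ker\bigl(\SL_n\Op \to \SL_n(\Op/\mathfrak{m}^k)\bigr)$ lies in $\ker\f$; since $\SL_n\Op / \G_k$ is finite, this yields $|\f(\SL_n\Op)| < \8$. For $n \geq 3$, $\G_k$ is generated as a normal subgroup by the elementary matrices with entries in $\mathfrak{m}^k$, so it suffices to show that for each $i \neq j$ the additive map $a \mapsto \f(e_{ij}(a))$ from $(\Op,+)$ to $\GL_D\Re$ has a kernel containing $\mathfrak{m}^k$ for some $k$. The case $n = 2$ is treated analogously, using Bruhat generators together with the Steinberg commutator identities among elementary matrices.

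In positive characteristic, $(\Op,+)$ has exponent $p$, so $\f(U_{ij})$ (where $U_{ij} := \{e_{ij}(a) : a \in \Op\}$) is an abelian subgroup of $\GL_D\Re$ in which every non-identity element has order $p$. Finite-order elements of $\GL_D\Re$ are semisimple, and a commuting family is simultaneously diagonalizable over $\C$ with diagonal entries among the $p$-th roots of unity; hence $|\f(U_{ij})| \leq p^D$. The kernel of $\f|_{U_{ij}}$ is therefore a finite-index additive subgroup of $\Op$ which, by completeness, must contain some $\mathfrak{m}^{k_{ij}}$. Taking $k$ to be the largest of the finitely many $k_{ij}$ gives $\G_k \subseteq \ker\f$.

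For the characteristic-zero case with $D < 2n$, the exponent-$p$ trick is unavailable, and the dimension hypothesis becomes essential. I would restrict $\f$ to a uniform pro-$p$ congruence subgroup $\G_k$ in which every element has a unique $p^m$-th root for every $m \geq 1$ (via the $p$-adic exponential on its Lie algebra). Every $\f(g)$ with $g \in \G_k$ is then $p^\8$-divisible in $\GL_D\Re$, forcing it into the identity component of the real Zariski closure $H := \overline{\f(\G_k)}^{Z}$; thus $\f|_{\G_k}$ factors through a connected real Lie group $H^0 \subseteq \GL_D\Re$. Using the Steinberg identities $[e_{ij}(a), e_{jl}(b)] = e_{il}(ab)$ one transports the $\mathfrak{sl}_n$-bracket into $\mathrm{Lie}(H^0)$; combined with the embedding $\Op \hookrightarrow \C$ (available in characteristic $0$) and the assumption that $\f(\G_k)$ is infinite, this would force $\mathrm{Lie}(H^0)$ to contain a faithful copy of $\mathfrak{sl}_n(\C)$ viewed as a real Lie algebra, whose minimal real representation has dimension $2n$---contradicting $D < 2n$.

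The technical heart of the argument is this last paragraph: passing rigorously from an abstract (not necessarily continuous) homomorphism out of the pro-$p$ group $\G_k$ to a Lie-algebra morphism compatible with the complex structure inherited from $\Op \hookrightarrow \C$. Without continuity one must lean entirely on the divisibility structure of $\G_k$ and on the commutator combinatorics of the Steinberg relations to rule out factorizations where $\f$ has infinite image but collapses most of the would-be Lie algebra. I expect this is where the full strength of the hypotheses ``complete DVR with finite residue field'' is used, and where the technical weight of the paper concentrates.
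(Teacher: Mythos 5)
Your positive-characteristic argument is essentially sound and closely parallels the paper's: you use semisimplicity and simultaneous diagonalization where the paper uses Lie--Kolchin and unipotence, but both conclude that $\f$ kills a finite-index subgroup of each root subgroup $E_{i,j}(\Op)$ and then invoke the fact that $\EL_n(\pi^k\Op)$ has finite index in $\SL_n\Op$.

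The characteristic-zero argument, however, contains a genuine gap and is a fundamentally different approach from the paper's. The claim that every element of a uniform congruence subgroup $\G_k$ has a $p^m$-th root in $\G_k$ for all $m$ is false: in a uniform pro-$p$ group the $p$-power map is injective with image the Frattini subgroup $\G_k^p \subsetneq \G_k$, so elements of $\G_k\setminus\G_k^p$ (e.g.\ $E_{1,2}(\pi^k)$) have no $p$-th root at all in $\G_k$. Hence the deduction that $\f(g)$ is $p^\8$-divisible, and with it the reduction to the identity component $H^0$, collapses; moreover a finite $p$-divisible group need not be trivial (take $\Z/q\Z$, $q$ coprime to $p$), so even granting divisibility the step is not automatic. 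The subsequent ``transport of the $\mathfrak{sl}_n$-bracket into $\mathrm{Lie}(H^0)$'' is also unjustified: $\f$ is an abstract homomorphism, so there is no derivative and no induced Lie-algebra map, and the strategy does not address the possibility that $\f(\G_k)$ is infinite but solvable, in which case $H^0$ carries no semisimple Lie algebra at all. The paper's actual proof avoids Zariski closures and Lie algebras entirely; it is an induction on $n$, and the engine in the base case $n=2$, $D=2$ is an arithmetic incompatibility between $\Op$ and $\Re$: choosing $q\in\Op^*$ with $q^4=-r$ a negative integer (via Hensel), the conjugation relation $q^2 E_{1,2}(x) q^{-2}=E_{1,2}(q^4x)$ pushed through $\f$ yields $\psi(q)\psi_A(x)=\psi_A(-rx)=-r\psi_A(x)$ with $\psi(q)>0$, forcing the additive map $\psi_A$ to vanish; the general case follows by tracking $1$-eigenspaces of unipotent images (a Jordan--H\"older flag) together with strong almost-perfectness of $\SL_n\Op$. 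This arithmetic ingredient, not a Lie-algebra dimension count, is what makes the $D<2n$ bound work, and it is missing from your proposal.
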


\noindent
\textbf{Remark:} The proof of Theorem  \ref{main theorem} is completely elementary. In particular, it does not rely on  Margulis Super-Rigidity.

The connection between this result and our motivating question is as follows: in the absence of unital ring-morphisms from $\Op \to \Re$, the result means that these abstract homomorphisms are indeed, up to finite index, dictated by ring-morphisms $\Op \to \Re$. Namely, up to restricting to a finite index subgroup, they arise from the zero map $\Op \to 0\in \Re$. This interpretation is clear in the context of our proof. Our objective is to show that a sufficient amount of the ring structure can be expressed in terms of the group structure of $\SL_n$. 

Fix $x\in\Op$ and $i\neq j$. We denote the elementary unipotent matrix with 1's on the diagonal,  $x$ in the $(i,j)^{th}$-entry, and 0's elsewhere by $E_{i,j}(x)\in \SL_n(\Op)$. Consider  the following two equations:

$$[E_{1,2}(x), E_{2,3}(y)] = E_{1,3}(xy)$$
$$E_{1,3}(x)\cdot E_{1,3}(y) = E_{1,3}(x+y) $$

This shows that if $n\geq 3$ both the additive and multiplicative structures of a ring are embedded in the group structure of $\SL_n$. This is not possible for $n=2$ but there is still a sufficient amount of information that is held about the ring inside the group structure of $\SL_2$, provided the ring has many units. The task is then  to pass this information, via the homomorphism from the source to the target, which is the essence of the proof. 

A consequence of our result is that if $D < 2n$ then the $D$-dimensional real representations of $\SL_n(\Op)$, as an abstract group, are continuous in the local-topology.

\section{Algebraic Facts}\label{Preliminaries}
In this section, we give a few algebraic facts that we shall need for the proof of Theorem~\ref{main theorem}. Recall $\Op$ is a complete discrete valuation ring and therefore is a principal ideal domain with a unique maximal ideal. Let $\pi$ be a fixed generator of the maximal ideal of $\Op$. Being a discrete valuation ring, $\Op$ has a natural topology on it and we shall consider this topology on $\Op$ in the sequel. 
\begin{lemma}
\label{finite index subgroup form} 
For any $\Op$ with zero characteristic, an additive subgroup is of finite index if and only if it contains a subgroup of the form $\pi^k \Op$.
\end{lemma}
 
\begin{proof} Let $A$ be a finite index subgroup of $\Op$. Then $A$ is both open and closed as a subgroup of $\Op$. The ring $\Op$ is a finite extension of $\Z_p$ and therefore there exist $x_1, x_2, \ldots, x_g\in \Op$ which generate $\Op$ over $\Z_p$. By hypothesis $\Op/A$ finite implies that there exists an integer $m$ such that for all $1 \leq i \leq g$ the elements $m x_i$, and therefore $\Z[mx_1, mx_2, \ldots. mx_g]$, are contained in the kernel of the projection map $\Op \rightarrow \Op/A$. But then $A$ closed implies that $m \Z_p[x_1, \dots, x_g] = \pi^{ \mathrm{val}(m)}\Op$ is contained in $ A$.

\end{proof}
\begin{lemma}(Generalized Hensel's Lemma) \label{Generalized Hensel's Lemma}
\label{ghl} Let $f(x) \in \Op[x]$ be a polynomial. If there exists $a \in \Op$ such that 
$$
f(a) \equiv 0(\mathrm{mod} f'(a)^2 \pi \Op ),
$$
then there exists $a_0 \in \Op$ satisfying
$$  
f(a_0) = 0 \,\, \mathrm{and} \,\, a_0\equiv a (\mathrm{mod}f'(a) \pi \Op).
$$
If $f'(a)$ is a nonzero divisor in $\Op$, then $a_0$ is unique.
\end{lemma}  
For a proof see \cite[Theorem 2.24]{MR1083765}  

\begin{lemma}\label{nonreal} For any $\Op$ with zero characteristic, there is a positive integer $r$ and an element $q \in \Op^*$ so that $q^4 = -r$. 
\end{lemma}
\begin{proof} It is enough to prove this result for $\Z_p$ as $\Op$ is a finite extension of $\Z_p$. For $\Z_p$, the proof follows by applying Lemma \ref{Generalized Hensel's Lemma} to the following $f(x) \in \Z_p[x]$.
$$
f(x) = \left\{\begin{array}{cc} x^4 + 31, & \mbox{if} \, \, p =2; \\ 
x^4 + (p-1), & \mbox{otherwise.}
\end{array}
\right.
$$
\end{proof}

Recall that, for a ring $\mathcal{R}$ (not necessarily unital), the elementary unipotent matrices $E_{ij}(x) \in M_n(\mathcal{R})$ for $x \in \mathcal{R}$ and $i \neq j$ are the matrices with $1$'s on the diagonal,  $x$ in the $(i,j)^{th}$-entry, and $0$'s elsewhere. We denote by $\EL_n(\mathcal{R})$ the group generated by the set of elementary unipotents $\{E_{i,j}(x) \in M_n(\mathcal{R}) : x\in \mathcal{R} \text{ and } i\neq j\}$.

\break
\begin{lemma} \cite[Proposition 5.1]{MR0174604} \label{generated by unipotents} \label{EL finite index}  \begin{enumerate}
 \item  The group $\SL_n(\Op)$ is generated by elementary unipotents for $n\geq 2$.
 \item   The subgroup $\EL_n(\pi^k \Op)$ is of finite index in $\SL_n(\Op)$, for $n\geq 2$.
\end{enumerate}
\end{lemma}

\begin{cor} \label{FiniteImage}
 If $\rho: \SL_n(\Op) \to G$ is a representation so that for some $i\neq j$ the image $\rho(E_{i,j} (\Op))$ is finite then $\rho( \SL_n(\Op))$ is finite.
\end{cor}

\begin{proof}
 If the image $\rho(E_{i,j} (\Op))$ is finite, then there is some $k$ so that $E_{i,j} (\pi^k\Op)\leq \ker(\rho)$. For any $r \neq s$ with $1 \leq r,s \leq n$, the groups $E_{i,j} (\pi^k\Op)$ and $E_{r,s} (\pi^k\Op)$ are conjugate in $\SL_n(\Op)$ therefore the group $E_{r,s} (\pi^k\Op)$ is also contained in the kernel of $\rho$. This means that $\EL_n(\pi^k \Op) \leq \ker(\rho)$ and hence by Lemma~\ref{generated by unipotents} the kernel has finite index in  $ \SL_n(\Op)$.
\end{proof}

\begin{prop}\label{perfect}
Every finite index subgroup of $\SL_n(\Op)$  has finite abelianization, i.e. it is strongly almost perfect. Furthermore, if either $|\Op/\pi \Op| > 3$ or $n> 2$ then  $\SL_n(\Op)$ is perfect.
\end{prop}

\begin{proof}
Let $G \leq \SL_n(\Op)$ be a finite index subgroup. Then, for each $i, j$ with $i\neq j$ the subgroup $G \cap E_{i,j} (\Op)$ must be of finite index in $E_{i,j} (\Op)$ and hence $G \geq \EL_n(\pi^k\Op)$ for some $k$.  Therefore, it is sufficient to show that $ \EL_n(\pi^k \Op)$ has finite abelianization. 

 For $n\geq 3$ this follows from the Steinberg relations which in fact shows that both $\EL_n(\pi^k \Op)$ and  $\SL_n(\Op)$ are perfect.

For the case of $n=2$ we further subdivide to consider two cases according to whether $|\Op/\pi \Op| >3$ or $|\Op/\pi \Op| \leq 3$.

Assume $|\Op/\pi \Op| >3$. Then, there is an $\xi \in \Op^*$ such that $\xi^2-1$ is invertible.  Indeed, $ (\Op/\pi \Op)^*$ is a cylic group of order greater than 2, which means that there is an element of order greater than 2. Let $\xi$ be a lift of this element under the natural map  $\Op \to (\Op/\pi \Op)$. Then $\xi^2 - 1$ is not in the kernel $ \pi\Op$ and hence $\xi^2-1$ is invertible. Then by Lemma 1.6 \cite{MR0174604} which states that if there is $\xi\in \Op^*$ such that $\xi^2-1$ is invertible then $\SL_n(\Op)$ is perfect, we obtain our result. 

For general $|\Op/\pi\Op|$ and $n = 2$: Observe that
$$
\left( \begin{matrix} 1 & \pi^k  \\ 0  & 1 \end{matrix} \right) \left( \begin{matrix} 1 & 0 \\ \pi^k   & 1 \end{matrix} \right) = 
\left( \begin{matrix} 1+ \pi^{2k}  & \pi^k  \\ \pi^k  & 1 \end{matrix} \right).
$$
Therefore after multiplying by suitable elements of $\EL_2(\pi^k \Op)$ we see that for some $x\in \Op$ the following element belongs to $\EL_2(\pi^k \Op)$:
$$\left( \begin{matrix} 
1+\pi^{2k} x & 0  \\  
0 &  (1+\pi^{2k}x)^{-1} 
\end{matrix}
\right).
$$     

Let $q=  1+\pi^{2k}x$. Then, $q^2-1 = \pi^{k_0}x'$ for some $k_0\geq 2k$ and $x' \in \Op^*$. Therefore, the commutator subgroup of 
$\EL_2(\pi^k \Op)$ contains 
 \begin{equation}\label{comm relation}
\[\(\begin{matrix} q & 0\\ 0 &q^{-1} \end{matrix}\), \(\begin{matrix} 1 & t\\ 0 &1 \end{matrix}\)\] = \(\begin{matrix} 1 & (q^2-1)t\\ 0 &1 \end{matrix}\),\text{ for every } t\in \pi^k\Op,
\end{equation}

and in particular, contains the subgroup 
$$\left( \begin{matrix} 
1 & \pi^{k_0+k}\Op  \\  
0 & 1
\end{matrix}
\right). 
$$

Considering the transpose analogue of the commutator relation (\ref{comm relation}) we see that  the commutator subgroup of $\EL_2(\pi^k \Op)$ contains the finite index subgroup $\EL_2(\pi^{k_1} \Op)$ for  $k_1= k_0 + k$.
Hence $\EL_2(\pi^k \Op)$  has finite abelianization. 
 
\end{proof}

\begin{lemma}\label{unip}
 If $S\leq \GL_D(\Re)$ is a solvable subgroup then there exists a finite index subgroup $S_0 \norm S$ such that $[S_0, S_0]$ is unipotent and is conjugate to an upper triangular unipotent group via an element of $\GL_D(\Re)$.
\end{lemma}

\begin{proof}
 Let $S_0$ be the finite index subgroup so that the Zariski closure $\~S_0^Z(\C)$ is Zariski-connected. By the Lie-Kolchin Theorem \cite{Humphreys} $\~S_0^Z(\C)$ is conjugate into the upper triangular group and the commutator subgroup $[\~S_0^Z(\C), \~S_0^Z(\C)]$ is unipotent. This means that $[S_0, S_0] \leq [\~S_0^Z(\C), \~S_0^Z(\C)]$ is unipotent. Since the entries of $S$ are in $\Re$, there is an $\Re$-basis which upper-triangulates the unipotent group $[S_0, S_0]$. 
\end{proof}

For a ring $\mathcal{R}$ we will denote by $N_n(\mathcal{R}), U_n(\mathcal{R}),D_n(\mathcal{R})\leq \EL_n(\mathcal{R})$  the maximal upper triangular group, maximal upper  triangular  unipotent group, and the maximal diagonal group respectively. 
 
\begin{lemma}\label{FinIndU} If $N_0$ is a finite index subgroup of $N_n(\Op)$ then $U_n(\Op) \cap [N_0, N_0]$ has finite index in $U_n(\Op)$.
\end{lemma}

\begin{proof}

The proof is by induction on $n$. 
 
For $n=2$, Let $N_0 \leq N_2(\Op)$ be the finite index subgroup of interest. Observe that, since $N_0 \cap D_2(\Op)$ is finite index in $D_2(\Op)$, there is an integer $k \geq0$ such that 
$$\(\begin{matrix} 1+\pi^k  & 0\\ 0 &(1+\pi^k )^{-1} \end{matrix}\) \in N_0.$$

Similarly, $N_0 \cap E_{12}(\Op)$ has finite index in $E_{12}(\Op)$ and by Lemma \ref{finite index subgroup form}  contains $E_{12}(\pi^r\Op)$ for some positive integer $r$.

Apply the commutation relation (\ref{comm relation}) with $q= 1+\pi^k$ and $t\in \pi^r\Op$ and we see that $[N_0, N_0] \cap U_n(\Op)$ contains the finite index subgroup

$$\left( \begin{matrix} 
1 & \pi^{k+r}\Op  \\  
0 & 1
\end{matrix}
\right).
$$

Now, assume it is true for $n$ and let us show it for $n+1$. Consider $N_n(\Op) \hookrightarrow N_{n+1}(\Op)$ by taking the last column of $N_{n+1}(\Op)$ to be trivial. Similarly we have  $U_n(\Op) \hookrightarrow U_{n+1}(\Op)$. Let $N_0\leq N_{n+1}(\Op)$ be the finite index subgroup in question. And let $N_0' = N_0 \cap N_n(\Op)$.

Consider,  $[N_0', N_0'] \cap U_n(\Op)$. Then, by induction $[N_0', N_0']\cap U_n(\Op) \geq U_n(\pi^k \Op)$ for some $k\geq 0$. Observing that $[N_0, N_0] \cap U_{n+1}(\Op)$ is normal in $U_{n+1}(\Op)$ the following commutator relation gives the desired result:
$$[E_{i, n} (\pi^k \Op), E_{n, n+1} ( \Op)] = E_{i, n+1} (\pi^k\Op).$$

\end{proof}
Combining Lemmas~\ref{unip} and \ref{FinIndU}, we obtain the following: 

\begin{lemma}\label{NiceImageUnip}
Let $\f: N_n(\Op) \to \GL_D(\Re)$ be a homomorphism. Then, there exists a normal finite index subgroup $N_0$ of $N_n(\Op)$ such that $U_0=[N_0, N_0] \cap U_n(\Op)$ is of finite index in $U_n(\Op)$ and so that the image $\f(U_0)$ is unipotent. 
\end{lemma}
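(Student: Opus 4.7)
My approach is to analyze the Zariski closure of $\f(N)$ in $\GL_D\Re$ and exploit the structure theory of solvable real algebraic groups. First, I would let $H$ denote the real Zariski closure of $\f(N)$ inside $\GL_D\Re$ and let $H^\circ$ be its identity component. Since $N$ is solvable (as upper triangular matrices are), $H$ is a solvable real algebraic group; being an $\Re$-algebraic subgroup of $\GL_D\Re$, it has only finitely many Zariski-connected components, and so $H^\circ$ is a normal subgroup of finite index in $H$. Setting $N_0 := \f^{-1}(H^\circ)$, I obtain a normal subgroup of $N$ of finite index.

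The key structural input I would use is that for any connected solvable real algebraic group $H^\circ$, the derived subgroup $[H^\circ, H^\circ]$ consists entirely of unipotent elements. This follows by complexifying and invoking Lie--Kolchin: the quotient $H^\circ_{\C}/R_u(H^\circ_{\C})$ is an algebraic torus, hence abelian, forcing $[H^\circ_{\C}, H^\circ_{\C}] \subseteq R_u(H^\circ_{\C})$. Consequently $\f([N_0, N_0]) \subseteq [H^\circ, H^\circ]$ is a set of unipotent matrices, and in particular $\f(U_0)$ is unipotent for any subgroup $U_0 \subseteq [N_0, N_0]$.

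It then remains to verify that $U_0 := [N_0, N_0] \cap U$ is of finite index in $U$. I would set $T_0 := T \cap N_0$ and $U_1 := U \cap N_0$; these are of finite index in $T$ and $U$ respectively since $N_0$ is of finite index in $N = T U$. The containment $[T_0, U_1] \subseteq U \cap [N_0, N_0] = U_0$ is immediate, so it suffices to show that $[T_0, U_1]$ already has finite index in $U$. Using the identity $[\mathrm{diag}(t_1,\ldots,t_n),\, E_{ij}(x)] = E_{ij}\bigl((t_i t_j^{-1} - 1)x\bigr)$, together with the facts that $T_0$ contains a sufficiently deep principal congruence subgroup of $T$ and $U_1$ contains one for $U$, a direct calculation shows that $[T_0, U_1]$ contains every $E_{ij}(\pi^N \Op)$ for $N$ large enough, and these elementary matrices generate the $N$-th principal congruence subgroup of $U$, which has finite index.

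The main obstacle I anticipate is the final congruence calculation, particularly in the case of residue characteristic $2$: the naive choice $t_i = 1 + \pi^k$, $t_j = (1+\pi^k)^{-1}$ yields $t_i t_j^{-1} - 1 = 2\pi^k + \pi^{2k}$, whose valuation exceeds $k$ when $2$ lies in the maximal ideal. Choosing $k$ sufficiently large (relative to the valuation of $2$) compensates for this, and the argument then goes through uniformly for all pairs $i<j$.
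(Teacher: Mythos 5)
Your proposal is correct and matches the paper's proof in essentially every respect: the paper also passes to the (finite-index, normal) preimage of the identity component of the Zariski closure, invokes Lie--Kolchin to conclude the derived subgroup of a connected solvable group is unipotent, and then establishes that $[N_0,N_0]\cap U$ has finite index in $U$ via the commutator of a deep diagonal congruence element with an elementary unipotent, giving $E_{ij}(\pi^{k+t}y(2+\pi^k y)\Op)\subseteq [N_0,N_0]\cap U$. One small remark: your worry about residue characteristic $2$ is a red herring --- since $\Op$ has characteristic $0$, the valuation of $2+\pi^k y$ is simply some finite constant, and any finite valuation already yields a finite-index congruence subgroup of $U$; no careful choice of $k$ relative to $\mathrm{val}(2)$ is needed (the paper handles all $n\geq 2$ by reducing, via induction and the relation $[E_{i,n-1}(\pi^k\Op),E_{n-1,n}(\Op)]=E_{i,n}(\pi^k\Op)$, to this single $n=2$ computation rather than running the diagonal-commutator argument uniformly over all pairs $i<j$ as you do).
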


\section{Proof of Theorem~\ref{main theorem}} \label{Pf}
\subsection{Proof in positive characteristic}

Let $\Op$ be a complete discrete valuation ring of positive characteristic. Let $\f:\SL_n(\Op) \to \GL_D(\Re)$ be a homomorphism. We shall show that the image of $\f$ is finite. 

With Lemma \ref{NiceImageUnip} we find a finite index subgroup $U_0 \leq U$ so that $\f(U_0)$ is unipotent. The ring $\Op$ has positive characteristic implies all the elements in $U_0$, and therefore of $\f(U_0)$, have finite order. Being a unipotent subgroup of $\GL_D(\Re)$ we obtain $\f(U_0)$ is finite. By Corollary \ref{FiniteImage} we conclude that the image $\f(\SL_n(\Op))$ is finite 

%
%
%
%
%
%
%

\subsection{Proof in Characteristic Zero  }

Now onwards we assume that $\Op$ is a complete discrete valuation ring of zero characteristic. We use induction for this case. We prove this for $n = 2$ first. 

{\bf Step 1: $\SL_2(\Op) \to \GL_2(\Re)$} 

Proof in this case follows from the following proposition combined with Corollary \ref{FiniteImage}.

\begin{prop}\label{HomOfN}
 For any representation $\f: N_2(\Op) \to \GL_2(\Re)$ the image $\f(U_2(\Op))$ is finite.
\end{prop}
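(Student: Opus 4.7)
The plan is to assume $\f(U)$ is infinite and extract from $\f$ a unital ring homomorphism $\Op \to \Re$, a map which cannot exist (as emphasized at the start of this section). Apply Lemma~\ref{NiceImageUnip} to obtain a finite-index normal subgroup $N_0 \norm N$ such that $U_0 = [N_0,N_0] \cap U$ has finite index in $U$ and $\f(U_0)$ is unipotent. Identifying $U \cong (\Op,+)$, if $\f(U_0) \neq 1$ then, after a change of basis, $\f(U_0) \subseteq \l\{I + tE_{12} : t \in \Re\r\}$, so $\f(u) = I + \psi(u) E_{12}$ for a non-zero additive homomorphism $\psi\:U_0\to\Re$. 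The JH decomposition (Definition~\ref{JH-Series}) applied to the solvable group $\f(N_0)$, which normalizes $\f(U_0)$, shows that $\f(N_0)$ fixes the unique $1$-eigenspace $\Re e_1$ of $\f(U_0)$ and is therefore upper triangular in the chosen basis; its two diagonal entries define characters $\a,\d\:N_0\to\Re^*$, and the ratio $\chi := \a/\d$ restricts to a character of the finite-index subgroup $T_0 := T\cap N_0$. Identifying $T$ with $\Op^*$ via $a \mapsto \operatorname{diag}(a,a^{-1})$, so that $T_0$ corresponds to a finite-index subgroup $H\leq\Op^*$, a direct calculation of $\f(tut^{-1})$ in the chosen basis gives the \emph{intertwining identity}
$$\psi(a^2 u) = \chi(a)\,\psi(u) \qquad (a\in H,\ u\in U_0).$$

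Fix $u_0 \in U_0$ with $\psi(u_0) \neq 0$ and set $f(\d) := \psi(\d u_0)/\psi(u_0)$ on the finite-index additive subgroup $S := \{\d\in\Op : \d u_0\in U_0\}$ of $\Op$; then $f$ is additive with $f(1) = 1$. Choose an integer $N$ large enough that $1 + \pi^N\Op \subseteq H$ and that Hensel's lemma supplies a square root $\sqrt{1+\d} \in 1 + \pi^N\Op$ for every $\d\in\pi^N\Op$ (in residue characteristic $2$, take $N$ somewhat larger to dominate the valuation of $2$). Apply the intertwining identity with $a = \sqrt{1+\d}$ and $u = u_0$, and expand $\psi((1+\d)u_0) = \psi(u_0) + \psi(\d u_0)$: setting $\mu(\d) := \chi(\sqrt{1+\d})$ one reads off $\mu(\d) = 1 + f(\d)$. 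Since $\chi$ and $\sqrt{\cdot}$ are both multiplicative where defined, $\mu(\d_1)\mu(\d_2) = \mu(\d_1+\d_2+\d_1\d_2)$, which combined with additivity of $f$ collapses to the clean ring-morphism identity
$$f(\d_1\d_2) = f(\d_1)\,f(\d_2) \qquad (\d_1,\d_2 \in \pi^N\Op).$$
Thus $f$ restricts to a (non-unital) ring homomorphism $\pi^N\Op\to\Re$.

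The endgame splits into two cases. If $c := f(\pi^N) \neq 0$, then $g(x) := f(\pi^N x)/c$ defines a \emph{unital} ring homomorphism $\Op\to\Re$: additivity is immediate, and multiplicativity follows from $f((\pi^N x)(\pi^N y)) = f(\pi^N x)f(\pi^N y)$ together with $f(\pi^N\cdot\pi^N xy) = f(\pi^N)f(\pi^N xy)$, contradicting the absence of such maps. If $c = 0$, multiplicativity forces $f \equiv 0$ on $\pi^{2N}\Op$, and since $\pi^N\Op/\pi^{2N}\Op$ is finite while $(\Re,+)$ is torsion-free, $f \equiv 0$ on $\pi^N\Op$; inserting this back into the intertwining identity yields $\psi(\d u) = f(\d)\psi(u) = 0$ for all $\d\in\pi^N\Op$ and $u\in U_0$, which (using $U_0 \supseteq \pi^K\Op$ for some $K$) propagates to $\psi \equiv 0$ on a finite-index subgroup and hence $\psi\equiv 0$, again contradicting $\psi(u_0)\neq 0$. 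The main obstacle I foresee is securing the clean multiplicativity $f(\d_1\d_2) = f(\d_1)f(\d_2)$, which depends on Hensel's lemma to supply the necessary square roots; the remaining ingredients (JH decomposition and the finite-index/torsion-free endgame) are routine.
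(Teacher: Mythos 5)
Your proof is correct, but it takes a genuinely different route to the contradiction than the paper does, even though both begin identically (Lemma~\ref{NiceImageUnip}, upper-triangularization, the intertwining identity for the torus action).

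The paper's strategy is a short, direct positivity argument: Lemma~\ref{nonreal} supplies a unit $q\in\Op^*$ with $q^4=-r$ a negative integer; substituting this into the intertwining relation and using $\Z$-linearity of the additive map gives $\l(\text{(product of real squares)}+r\r)\psi_A(x)=0$, and the coefficient is strictly positive, so $\psi_A\equiv 0$. No ring homomorphism is ever constructed. Your strategy, by contrast, \emph{realizes} the heuristic ``absence of ring-morphisms $\Op\to\Re$'' literally: Hensel's lemma for square roots of $1+\delta$ turns the intertwining identity into multiplicativity of $f(\delta)=\psi(\delta u_0)/\psi(u_0)$ on $\pi^N\Op$, which you then upgrade (after a clean two-case analysis on $f(\pi^N)$) to a unital ring homomorphism $\Op\to\Re$. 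This is a nice conceptual reorganization, and your bookkeeping (the finite-index passages, the $c\neq 0$ versus $c=0$ dichotomy, and the use of torsion-freeness of $\Re$ to push vanishing from $\pi^{2N}\Op$ up to $\pi^N\Op$ and then to all of $U_0$) all checks out. The trade-off is length: the paper's route is a one-step application of Lemma~\ref{nonreal}, while yours requires the full ring-homomorphism extraction plus a case split.

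One small incompleteness you should address: you conclude by ``contradicting the absence of such maps,'' but the paper never actually proves that no unital ring homomorphism $\Op\to\Re$ exists — it only uses this as motivating intuition. To make your proof self-contained you should close the loop, and the natural way to do so is precisely Lemma~\ref{nonreal}: given a unital ring homomorphism $g\colon\Op\to\Re$ and $q\in\Op^*$ with $q^4=-r$, one gets $g(q)^4=-r<0$, impossible in $\Re$. So in the end your argument also rests on Lemma~\ref{nonreal} — it just reaches it by a longer path. (A minor remark on the paper's own computation: the displayed exponent $\psi_2(q)^{-2}$ and the resulting product $\psi_1(q)^2\psi_2(q)^2$ look like typos for $\psi_2(q)^{2}$ and the ratio $\psi_1(q)^2/\psi_2(q)^2$, but since either expression is a positive real the contradiction survives; your version of the intertwining identity with $\chi=\alpha/\delta$ has the sign structure right.)
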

\begin{proof}
With the representation fixed, let $U_0 \leq U_2(\Op)$ be the finite index subgroup guaranteed by  Lemma \ref{NiceImageUnip} so that $\f(U_0)$ is unipotent. 

If the 1-eigen space of $\f(U_0)$ is 2-dimensional then the map $\f$ factors through $U_0$ and the result follows. Therefore, assume by contradiction that it is one dimensional. Since the image $\f(U_0)$ has $\Re$-entries, the 1-eigen space is defined over $\Re$ and so, up to post composing with an inner automorphism of $\GL_2\Re$ we may assume that the image $\f(U_0)$ is upper triangular unipotent.

Since the  image of the centralizer (respectively normalizer) of $U_0$ must centralize (respectively normalize) the image of $U_0$ we see that the image $\f(U_2(\Op))$ is upper triangular with $\pm 1$ on the diagonal (and respectively the image $\f(N_2(\Op))$ is upper triangular). 

This gives rise to an additive map $\psi_A: \Op \to \Re$ and multiplicative maps $\psi_i: \Op^* \to \Re^*$ as follows:

$$ \f\(\begin{matrix} 1 & x\\ 0 &1 \end{matrix}\) = \(\begin{matrix} \pm 1 & \psi_A(x)\\ 0 &\pm 1 \end{matrix}\).$$
and 
$$\f\(\begin{matrix} q & 0\\ 0 &q^{-1} \end{matrix}\) = \(\begin{matrix} \psi_1(q) & *\\ 0 &\psi_2(q) \end{matrix}\) = \(\begin{matrix} \psi_1(q) & 0\\ 0 &\psi_2(q) \end{matrix}\) \(\begin{matrix} 1 & *\\ 0 & 1 \end{matrix}\)$$ 

Consider the following relation for $q^2\in \Op^*$, $x\in \Op$, $r\in \Z$:

$$ \(\begin{matrix} q^2 &0\\ 0 & q^{-2} \end{matrix}\) 
 \(\begin{matrix} 1 & x\\ 0 & 1 \end{matrix}\)
  \(\begin{matrix}q^{-2} & 0\\ 0 & q^2 \end{matrix}\)
=    \(\begin{matrix} 1 & q^4x\\ 0 &1 \end{matrix}\) $$

Using our definitions of $\psi_i$ and $\psi_A$, after applying $\f$ to both sides of the equation above and observing that $\left( \begin{matrix} 1 & * \\ 0 & 1 \end{matrix} \right)$ centralizes the image of $U_0$ we get the following:

$$
\(\begin{matrix} \psi_1(q)^2 & * \\ 0 & \psi_2(q)^{-2} \end{matrix}\) 
 \(\begin{matrix} \pm1 & \psi_A(x)\\ 0 & \pm1 \end{matrix}\)
 \(\begin{matrix}\psi_1(q)^2 & * \\ 0 & \psi_2(q)^{-2} \end{matrix}\)
 =
 \(\begin{matrix} \pm1 & \psi_A(q^4x)\\ 0 &\pm1 \end{matrix}\). 
$$

 Performing the matrix multiplication, we obtain the following equation, which holds for every $x\in \Op$ and $q\in \Op^*$:

\begin{equation}
\label{mul-add1}
\psi_1(q)^2\psi_2(q)^2 \psi_A(x) = \psi_A(q^4x).
\end{equation}

By Lemma~\ref{nonreal}, we can find $q\in \Op^*$ so that $q^4$ is a negative integer, say $-r$. Using the fact that additive maps between abelian groups are $\Z$-equivariant, equation (\ref{mul-add1}) becomes 
$$(\psi_1(q)^2\psi_2(q)^2+r)\psi_A(x) = 0.$$
But the above expression is in $\Re$ so that $\psi_1(q)^2\psi_2(q)^2+r$ must be positive. Therefore we must have $\psi_A(x) = 0$ for all $x \in \Op$. This contradicts our assumption that the 1-eigen space of $\f(U_0)$ is 1 dimensional. 
\end{proof}


{\bf Step 2: $ \SL_2(\Op) \to \GL_3(\Re)$}


\begin{proof}
We begin by giving the proof in case $\f: \SL_2(\Op) \to \GL_3(\Re)$ is reducible. We then show that any representation into $\GL_3(\Re)$ must either be reducible or have finite image. 

If $\f$ is reducible, then there is an invariant subspace $V$ of dimension one or two. By extending a basis for $V$ to a basis of $\Re$, we may conjugate with an element of $\GL_3(\Re)$ so that $\f(\SL_2(\Op))$ is an upper block triangular subgroup of $\GL_3(\Re)$. This gives rise to a map from the image $\f(\SL_2(\Op)) \to \GL_1(\Re) \times \GL_2(\Re)$ with abelian kernel. Applying the previously established fact that any representation $\SL_2(\Op) \to \GL_2(\Re)$ has finite image, we see that $\f(\SL_2(\Op))$ contains a finite index abelian subgroup. But, as $\SL_2(\Op)$ is strongly almost perfect (Lemma~\ref{perfect}), we deduce that $\f(\SL_2(\Op))$ is finite.

We now show that either $\f$ is reducible or has finite image. As before, we apply Lemma \ref{NiceImageUnip} to find $U_0$ of finite index in $U_2(\Op)$ so that $\f(U_0)$ is unipotent. 

Let $V_1 \subset \Re^3$ be the 1-eigen space  of $\f(U_0)$. Recall that it is $N_2(\Op)$ invariant since $U_0\norm N$. If $V_1$ is a 3-dimensional space then image of $U_0$ is trivial and hence by Corollary~\ref{FiniteImage}, we get that the image of $SL_2(\Op)$ is finite. If $V_1$ is not 3-dimensional, then either $V_1$ or $\Re^3/V_1$ is two dimensional.  

Again, since $V_1$ is $N_2(\Op)$-invariant, we get two homomorphisms $N_2(\Op) \to \GL(V_1)$ and $N_2(\Op) \to \GL(\Re^3/V_1)$. By Proposition \ref{HomOfN}, we must have that the image of $U_2(\Op)$ in each is finite. In particular, by choice of $V_1$ the image of $U_0$ in both $\GL(V_1)$ and $\GL(\Re^3/V_1)$ is trivial.

Therefore, up to post-composing $\f$ with the transpose inverse automorphism of $\GL_3(\Re)$ if necessary, we may assume that the 1-eigen space of $\f(U_0)$ has dimension two.  

Now, since $U_0$ and $U_0^t$ (the group consisting of transpose matrices of $U_0$) are conjugate inside $\SL_2(\Op)$, the 1-eigen space of the image $\f(U_0^t)$ has dimension two as well. Therefore the intersection of these two 2-dimensional spaces must be non-trivial in $\Re^3$ which means that the image of the group  $\<U_0, U_0^t\>$ has a non-trivial 1-eigen space. 

The group $\<U_0, U_0^t\>$ is of finite index in $\SL_2(\Op)$. Up to passing to a further finite index subgroup if necessary, we may assume that it is normal in $\SL_2(\Op)$ and hence the non-trivial 1-eigen space of this finite index normal subgroup is invariant under $\SL_2(\Op)$. This means that $\f$ is reducible. 

\end{proof}
 
{\bf Step 3: The general case}
\begin{proof}
Now onwards, whenever we speak of $\SL_{n-1}(\Op) \leq \SL_{n}(\Op)$ we mean that we view $\SL_{n-1}(\Op)$ as a subgroup of  $\SL_{n}(\Op)$ embedded in the upper left-hand corner of $\SL_{n}(\Op)$.

To proceed by induction, we assume that the image of any homomorphism $\SL_{n-1}(\Op) \rightarrow \GL_{2n-3}(\Re)$ is finite. By considering $\SL_{n-1}(\Op) \leq \SL_{n}(\Op)$  and using Corollary~\ref{FiniteImage} we get that $\SL_{n}\Op \to \GL_D(\Re)$ has finite image for all $D<2n-3$.

We are left to prove that if $2n-3<D \leq 2n-1$ then the image of $\f: \SL_n(\Op) \rightarrow \GL_{D}(\Re)$ is finite. The following argument works for both $D = 2n-2$ and $D = 2n-1$. The argument for $D = 2n-1$ follows by the induction hypothesis. After proving for $D = 2n-2$, we apply the same argument for $D = 2n-1$ and use the result for $D = 2n-2$ in this.

As before, let $U_0$ be determined by Lemma~\ref{NiceImageUnip}. Let $$L = \{(l_{ij}) \in \SL_n(\Op) \mid l_{ii} = 1, l_{ij} = 0\, \, \forall\, i \neq j\, \mathrm{and} \, j \neq n\}$$ 
be the abelian subgroup of $\SL_n(\Op)$ consisting of matrices having $1$'s on the diagonal and non-trivial entries only in the last column. It is easily verified that $L$ is normalized by $\SL_{n-1}(\Op) \leq \SL_{n}(\Op)$. By intersecting $L$ with $U_0$, we obtain a finite index subgroup $L_0$ of $L$ whose image is unipotent. By Lemma~\ref{finite index subgroup form}, we can pass to a further finite index subgroup and assume that there exists an integer $m$ such that $$L_0 = \{(l_{ij}) \in L \mid l_{ij} \in \pi^m \Op \,\, \forall \,\,  i \neq j \} $$ is contained in $U_0$ and is also normalized by $\SL_{n-1}(\Op)$.

The image  $\f(L_0)$ is unipotent, therefore there exists a flag  
\begin{eqnarray}
\label{JH}
\{0\}=V_0 \subset V_1 \subset V_2 \subset \cdots \subset V_k=\Re^D
\end{eqnarray} of susbpaces of $\Re^D$ with the property that $V_1$ is the maximal 1-eigen space for $\f(L_0)$ and $V_j$ is the maximal 1-eigen space for the quotient action on $\Re^D/V_{j-1}$. Since $\f(L_0)$ is normalized by $\f(\SL_{n-1}(\Op))$, the flag in (\ref{JH}) is preserved by $\f(\SL_{n-1}(\Op))$.

If $k=1$ then $\Re^D$ is the 1-eigenspace of $\f(L_0)$, that is to say the image of $L_0$ is trivial. Therefore the image of $E_{1,n}(\Op) \leq L$ is finite and again, Corollary~\ref{FiniteImage} shows that the image of $\SL_n(\Op)$ is finite.

We now assume that $k>1$. The argument proceeds in two cases depending on whether $2 \leq \dim(V_j) \leq D-2$ for some $j$ or not. Assume that $2 \leq \dim(V_{j_0}) \leq D-2$ for some $j_0$. By assumption on $D$ this means that the dimension, and codimension of $V_{j_0}$ both satisfy the inequality
$$2 \leq \dim(V_{j_0}), D-\dim(V_{j_0})  < 2(n-1).$$
 This now allows us to apply the induction hypothesis to the action of $\f(\SL_{n-1}(\Op))$ on both $V_{j_0}$ and $\Re^D/V_{j_0}$ and we get that the image of the map from $\f(\SL_{n-1}(\Op))$ to $ \GL(V_{j_0}) \times \GL(\Re^D/V_{j_0})$ is finite. Let $\G \leq \SL_{n-1}(\Op)$ be the finite index subgroup with trivial image in $ \GL(V_{j_0}) \times \GL(\Re^D/V_{j_0})$. Since the kernel of the map $ \stab(V_{j_0}) \to \GL(V_{j_0}) \times \GL(\Re^D/V_{j_0})$ is abelian, we see that $\f(\G)$ is abelian, and hence finite since $\SL_{n-1}(\Op)$ is strongly almost perfect (Lemma~\ref{perfect}). In particular, this implies that the image of $E_{12}(\Op)$ is finite, which concludes the proof in this case.
 
Now we are left with the case for which $\dim(V_j) = 1$ or $D-1$ for every $j=1, \dots, k-1$. This means that the flag (\ref{JH}) for $\f(L_0)$ is $\{0\}\subset V_1 \subset \Re^D$, with $V_1$ being either of dimension or codimension one. Again, by postcomposing $\f$ with the transpose inverse automorphism of $\GL_D(\Re)$ if necessary, we can assume that the 1-eigen space of $L_0$ is $D-1$ dimensional. 

Consider the $n$ distinct conjugates of $L$ that correspond to the distinct columns of $\SL_{n}(\Op)$. By taking these conjugates of $L_0$, we generate $\EL_n(\pi^m \Op)$. Each of these column spaces has a $D-1$ dimensional 1-eigenspace, let us call these $W_1, \dots, W_n$. Then, $\Cap{i=1}{n} W_i$ is a 1-eigenspace for $\f(\EL_n(\pi^m \Op))$. The following shows that since $D \geq n+1$, the intersection is not trivial:

\begin{lemma} Let $W_1, W_2, \ldots, W_n$ be co-dimension one subspaces in a $D$ dimensional space. Then $\dim(\cap_{i=1}^n W_i) \geq D-n$.
\end{lemma}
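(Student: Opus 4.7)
The plan is to prove this by a direct rank-nullity argument applied to the simultaneous functional. Each codimension-one subspace $W_i$ in an ambient $D$-dimensional space $V$ is the kernel of a nonzero linear functional $\phi_i \in V^*$. I would assemble these into a single linear map
\[
\Phi \: V \to \Re^n, \qquad \Phi(v) = (\phi_1(v), \phi_2(v), \ldots, \phi_n(v)),
\]
and observe that $\cap_{i=1}^n W_i = \ker(\Phi)$ by construction.

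The next step is to invoke the rank-nullity theorem: $\dim(\ker \Phi) = D - \mathrm{rank}(\Phi)$. Since the image of $\Phi$ lies in $\Re^n$, we have $\mathrm{rank}(\Phi) \leq n$, and hence $\dim(\cap_{i=1}^n W_i) \geq D - n$, which is the desired bound.

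Alternatively, the same conclusion can be proved by a short induction on $n$. For $n=1$, $\dim(W_1) = D - 1 \geq D - 1$. For the inductive step, one uses the standard fact that intersecting any subspace with a codimension-one subspace drops the dimension by at most one, so
\[
\dim\!\l(\Cap{i=1}{n} W_i\r) \geq \dim\!\l(\Cap{i=1}{n-1} W_i\r) - 1 \geq (D - (n-1)) - 1 = D - n.
\]
There is no real obstacle here; this is a standard linear-algebra lemma and either approach gives a one-line proof. I would favor the functional-based formulation since it makes the bound transparent and requires no induction.
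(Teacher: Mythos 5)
Your proof is correct. Your preferred rank-nullity argument is a genuinely different (and cleaner) route than the paper's: the paper invokes the two-subspace dimension identity $\dim(W_1 + W_2) = \dim W_1 + \dim W_2 - \dim(W_1 \cap W_2)$ and leaves the induction to the reader (note the paper's displayed formula misprints $W_1 \cup W_2$ where it means the subspace sum $W_1 + W_2$), which is essentially your second, inductive alternative. Encoding each $W_i$ as $\ker\phi_i$ and stacking the functionals into $\Phi\: V \to \Re^n$ avoids induction entirely, since $\cap_i W_i = \ker\Phi$ and rank-nullity gives $\dim(\ker\Phi) = D - \mathrm{rank}(\Phi) \geq D - n$ in one step; it also makes transparent exactly when the bound is tight (when the $\phi_i$ are linearly independent). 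Both are one-line proofs of a standard fact, so the choice is a matter of taste.
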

\begin{proof} This result follows by $\dim(W_1 \cup W_2) = \dim(W_1) + \dim(W_2) - \dim(W_1 \cap W_2)$. 
\end{proof}

Let us pass to a finite index subgroup of $\EL_n(\pi^m \Op)$ which is normal in $\SL_{n}(\Op)$. Then $V$, the 1-eigenspace for the image of this subgroup is at least $(D-n)$-dimensional, at most $(D-1)$-dimensional and $\SL_{n}(\Op)$-invariant.

This gives a map $\f(\SL_n(\Op))  \rightarrow \GL(V) \times \GL(\Re^D/V)$. The dimension and co-dimension of $V$ are both less than $D$. We have already established that this means that the image of $\SL_n(\Op)$ in $\GL(V) \times \GL(\Re^D/V)$ is finite(notice that for $D = 2n-2$, we have $\dim(V) \leq 2n-3$ and result follows by induction and for $D = 2n-1$, $\dim(V) \leq 2n-2$ and result follows from $D = 2n-2$). We see that $\f(\SL_n(\Op))$ has to contain a finite index abelian subgroup. But, as $\SL_n(\Op)$ is strongly almost perfect, we deduce that $\f(\SL_n(\Op))$ is finite.

\end{proof}
\begin{cor}
Assume that $|\Op/\pi \Op| > 3$. The image of any representation $\SL_2\Op \to \GL_2 \Re$ is trivial.
\end{cor}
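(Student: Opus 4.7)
The plan is to combine the finiteness statement of the main theorem with the perfectness statement of Proposition \ref{perfect}. By the main theorem applied with $n=2$ and $D=2<2n$, we already know that the image $\f(\SL_2\Op)$ is a \emph{finite} subgroup of $\GL_2\Re$.

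Next, I would observe that every finite subgroup of $\GL_2\Re$ is solvable. Indeed, any finite subgroup of $\GL_2\Re$ preserves some inner product (by averaging) and is therefore conjugate inside $\GL_2\Re$ to a subgroup of $O(2)$; the finite subgroups of $O(2)$ are cyclic or dihedral, and in either case solvable. Hence $\f(\SL_2\Op)$ is a finite solvable group.

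On the other hand, the hypothesis $\Op\neq\Z_2,\Z_3$ implies that the residue field has cardinality strictly greater than $3$: in characteristic $0$, $\Op$ is a finite extension of $\Z_p$ and $|\Op/\pi\Op|=p^{f}$, so the only complete discrete valuation rings with $|\Op/\pi\Op|\leq 3$ are $\Z_2$ and $\Z_3$. By Proposition \ref{perfect}, it follows that $\SL_2\Op$ is perfect, and consequently $\f(\SL_2\Op)=[\f(\SL_2\Op),\f(\SL_2\Op)]$ is perfect as well. A group that is simultaneously perfect and solvable must be trivial (the derived series stabilizes at the group itself and also terminates at the identity), so $\f(\SL_2\Op)=\{I\}$.

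There is essentially no obstacle here once the main theorem and Proposition \ref{perfect} are in hand; the only point requiring a brief justification is the classification of finite subgroups of $\GL_2\Re$ up to conjugacy, which is standard. The role of the hypothesis $\Op\neq\Z_2,\Z_3$ is exactly to guarantee perfectness of $\SL_2\Op$, which fails for these two excluded rings (and correspondingly the corollary fails, as the remark preceding the corollary indicates).
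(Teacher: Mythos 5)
Your proof follows essentially the same outline as the paper's: the main theorem gives finiteness of the image, and perfectness of $\SL_2\Op$ then forces triviality. In fact your handling of the compact-subgroup step is slightly more careful than the paper's: the paper identifies the maximal compact subgroup of $\GL_2\Re$ as $SO_2(\Re)$ and invokes its abelianness, but the maximal compact of $\GL_2\Re$ is $O(2)$, not $SO(2)$. You correctly conjugate into $O(2)$ and observe that every finite subgroup of $O(2)$ is solvable; a perfect solvable group is trivial, so the conclusion still follows. (To salvage the paper's phrasing one would first note that a perfect group has trivial determinant image, landing it in $\SL_2\Re$ where $SO(2)$ is indeed maximal compact.)

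There is, however, a genuine flaw in your reduction to Proposition \ref{perfect}. You claim that $\Op\neq\Z_2,\Z_3$ forces $|\Op/\pi\Op|>3$, asserting that $\Z_2$ and $\Z_3$ are the only complete discrete valuation rings with residue field of size at most $3$. This is false: a totally ramified extension of $\Z_2$, such as $\Z_2[\sqrt{2}]$, is a complete DVR of characteristic zero, is not $\Z_2$, and yet has residue field of cardinality $2$. For such a ring $\SL_2\Op$ surjects onto $\SL_2(\Op/\pi\Op)\cong S_3$, which has nontrivial abelianization, so $\SL_2\Op$ is \emph{not} perfect and Proposition \ref{perfect} does not apply. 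The paper's proof makes the same leap --- asserting perfectness ``whenever $\Op\neq\Z_2,\Z_3$'' rather than under the hypothesis $|\Op/\pi\Op|>3$ which Proposition \ref{perfect} actually requires --- so this is a shared imprecision in the statement rather than a defect peculiar to your argument. The correct hypothesis for the corollary is $|\Op/\pi\Op|>3$, and under that hypothesis your proof goes through.
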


\begin{proof}

Theorem \ref{main theorem} shows that the image of any representation $\SL_2(\Op) \to \GL_2(\Re)$ is finite, therefore compact, and hence contained in a conjugate of the maximal compact subgroup $\mathrm{SO}_2(\Re)$. Since $\mathrm{SO}_2(\Re)$ is abelian and $\SL_2\Op$ is perfect whenever $|\Op/\pi \Op| > 3$, we conclude that the image is trivial. 

\end{proof}


\noindent {\bf Acknowledgements:}
The authors would like to thank Uri Bader for useful comments on a preliminary version of this work and Igor Erovenko for a useful conversation regarding the context of this work. They also would like to thank Technion University for their hospitality as this work was initiated during a workshop hosted there. The first and second listed authors were partially supported by the European Research Council (ERC) grant agreement 203418 and the Center for Advanced Studies in Mathematics at Ben Gurion University respectively.

\bibliography{refs}
\bibliographystyle{alpha}

\end{document}